
\documentclass[12pt]{amsart}
\usepackage{amssymb, amstext, amscd, amsmath}
\usepackage{mathtools, xypic, color, dsfont}

\makeatletter
\def\@cite#1#2{{\m@th\upshape\bfseries%
[{#1\if@tempswa{\m@th\upshape\mdseries, #2}\fi}]}}
\makeatother


\mathtoolsset{centercolon}

\theoremstyle{plain}
\newtheorem{thm}{Theorem}[section]
\newtheorem{cor}[thm]{Corollary}

\newtheorem{lem}[thm]{Lemma}

\theoremstyle{definition}

\newtheorem{defn}[thm]{Definition}




\newcommand{\bD}{{\mathds{D}}}

\newcommand{\bN}{{\mathds{N}}}

\newcommand{\bT}{{\mathds{T}}}
\newcommand{\bZ}{{\mathds{Z}}}

  \newcommand{\A}{{\mathcal{A}}}
  \newcommand{\B}{{\mathcal{B}}}
  \newcommand{\C}{{\mathcal{C}}}

\renewcommand{\H}{{\mathcal{H}}}
  
  \newcommand{\J}{{\mathcal{J}}}
  \newcommand{\K}{{\mathcal{K}}}

\renewcommand{\P}{{\mathcal{P}}}

\renewcommand{\S}{{\mathcal{S}}}


\renewcommand{\phi}{\varphi}
\newcommand{\upchi}{{\raise.35ex\hbox{\ensuremath{\chi}}}}


\newcommand{\fA}{{\mathfrak{A}}}
\newcommand{\fB}{{\mathfrak{B}}}

\newcommand{\ft}{{\mathfrak{t}}}
\newcommand{\fu}{{\mathfrak{u}}}


\newcommand{\rC}{{\mathrm{C}}}


\newcommand{\qand}{\quad\text{and}\quad}

\newcommand{\qforal}{\quad\text{for all}\ }

\newcommand{\AND}{\text{ and }}


\newcommand{\AD}{\mathrm{A}(\mathds{D})}
\newcommand{\CT}{{\mathrm{C}(\mathds{T})}}

\newcommand{\cenv}{\mathrm{C}^*_{\text{e}}}

\newcommand{\gal}{\alpha_{\lambda}}

\newcommand{\ol}{\overline}

\newenvironment{sbmatrix}{\left[\begin{smallmatrix}}{\end{smallmatrix}\right]}

\begin{document}
\title{Semicrossed products of the disk algebra}

\author[K.R.Davidson]{Kenneth R. Davidson}
\thanks{First author partially supported by an NSERC grant.}
\address{Pure Mathematics Department\\
University of Waterloo\\Waterloo, ON\; N2L--3G1\\CANADA}
\email{krdavids@uwaterloo.ca}

\author[E.G. Katsoulis]{Elias~G.~Katsoulis}
\thanks{Second author was partially supported by a grant from ECU}
\address{ Department of Mathematics\\
University of Athens\\
15784 Athens \\GREECE \vspace{-2ex}}
\address{\textit{Alternate address:} Department of Mathematics\\
East Carolina University\\ Greenville, NC 27858\\USA}
\email{katsoulise@ecu.edu}

\begin{abstract}
If $\alpha$ is the endomorphism of the disk algebra, $\AD$, induced by
composition with a finite Blaschke product $b$, then the semicrossed product
$\AD\times_{\alpha} \bZ^+$ imbeds canonically, completely isometrically
into $\rC(\bT)\times_{\alpha} \bZ^+$.  Hence in the case of a non-constant Blaschke product $b$, the C*-envelope has the form
$ \rC(\S_{b})\times_{s} \bZ$,
where $(\S_{b}, s)$ is the solenoid system for $(\bT, b)$. In the case where $b$ is a constant, then the C*-envelope of $\AD\times_{\alpha} \bZ^+$ is strongly Morita equivalent to a crossed product of the form $ \rC(\S_{e})\times_{s} \bZ$, where  $e \colon \bT \times \bN \longrightarrow \bT \times \bN$ is a suitable map and $(\S_{e}, s)$ is the solenoid system for $(\bT \times \bN , \, e)$ .
\end{abstract}

\subjclass[2000]{47L55}
\keywords{semicrossed product, crossed product, disk algebra, C*-envelope}

\date{}
\maketitle

\section{Introduction} \label{S:intro}

If $\A$ is a unital operator algebra and $\alpha$ is a completely contractive
endomorphism, the semicrossed product is an operator algebra $\A \times_\alpha \bZ_+$
which encodes the covariant representations of $(\A,\alpha)$: namely completely contractive
unital representations $\rho:\A\to\B(\H)$ and contractions $T$ satisfying
\[ \rho(a) T = T \rho(\alpha(a)) \qforal a \in \A .\]
Such algebras were defined by Peters \cite{Pet} when $\A$ is a C*-algebra.

One can readily extend Peter's definition \cite{Pet} of the
semicrossed product of a C*-algebra by a $*$-endomorphism to unital operator
algebras and unital completely contractive endomorphisms.
One forms the \textit{polynomial algebra} $\P(\A,\ft)$ of formal polynomials
of the form $p=\sum_{i=0}^n \ft^i a_i$, where $a_i\in\A$, with multiplication
determined by the covariance relation $a\ft = \ft \alpha(a)$ and the norm
\[ \| p \| = \sup_{(\rho,T) \text{ covariant}} \big\| \sum_{i=0}^n T^i \rho(a_i) \big\| .\]
This supremum is clearly dominated by $\sum_{i=0}^n \|a_i\|$; so this norm
is well defined.
The completion is the semicrossed product $\A \times_\alpha \bZ_+$.
Since this is the supremum of operator algebra norms, it is also
an operator algebra norm.
By construction, for each covariant representation $(\rho,T)$,
there is a unique completely contractive representation
$\rho \times T$ of $\A \times_\alpha \bZ_+$ into $\B(\H)$ given by
\[ \rho \times T(p) = \sum_{i=0}^nT^i \rho(a_i) .\]
This is the defining property of the semicrossed product.

In this note, we examine semicrossed products of the disk algebra by
an endomorphism which extends to a $*$-endomorphism of $\CT$.
In the case where the endomorphism is injective, these have the
form $\alpha(f) = f\circ b$ where $b$ is a non-constant Blaschke product.
We show that every covariant representation of $(\AD,\alpha)$ dilates to a covariant
representation of $(\CT,\alpha)$.
This is readily dilated to a covariant representation $(\sigma,V)$,
where $\sigma$ is a $*$-representation of $\CT$ (so $\sigma(z)$ is unitary)
and $V$ is an isometry.
To go further, we use the recent work of Kakariadis and Katsoulis \cite{KK}
to show that $\CT \times _\alpha \bZ^+$ imbeds completely isometrically into
a C*-crossed product $\rC(\S_b) \times_\alpha \bZ$.
In fact, $\cenv(\CT \times _\alpha \bZ^+)= \rC(\S_b) \times_\alpha \bZ$ and as a consequence, we obtain that $(\rho,T)$ dilates to a covariant representation
$(\tau,W)$, where $\tau$ is a $*$-representation of $\CT$ (so $\sigma(z)$ is unitary)
and $W$ is a unitary.

In contrast, if $\alpha$ is induced by a constant Blashcke product,
we can no longer identify $\cenv(\CT \times _\alpha \bZ^+)$ up to isomorphism.
In that case, $\alpha$ is evaluation at a boundary point. Even though every covariant representation of $(\AD,\alpha)$ dilates to a covariant
representation of $(\CT,\alpha)$, the theory of \cite{KK} is not directly applicable
since $\alpha$ is not injective. Instead, we use the process of
``adding tails to C*-correspondences" \cite{MT}, as modified in
\cite{DR, KK2} and we identify $\cenv(\CT \times _\alpha \bZ^+)$
up to strong Morita equivalence as a crossed product.
In Theorem~\ref{constantcase} we show that
$\cenv(\CT \times _\alpha \bZ^+)$ is strongly Morita equivalent to a 
C*-algebra of the form $ \rC(\S_{e})\times_{s} \bZ$,
where  $e \colon \bT \times \bN \longrightarrow \bT \times \bN$ is a
suitable map and $(\S_{e}, s)$ is the solenoid system for $(\bT \times \bN , \, e)$.

Semi-crossed products of the the disc algebra were introduced and first studied 
by Buske and Peters in \cite{BP}, following relevant work of 
Hoover, Peters and Wogen \cite{HPW}. 
The algebras $\AD\times_{\alpha} \bZ^+$, where $\alpha$ is an arbitrary 
endomorphism, where classified up to algebraic endomorphism in \cite{DKconj}. 
Results associated with their C*-envelope can be found in 
\cite[Proposition III.13]{BP} and \cite[Thoorem 2]{Pow}. 
The results of the present paper subsume and extend these earlier results.

\section{The Disk Algebra} \label{S:covariance}

The C*-envelope of the disk algebra $\AD$ is $\CT$, the space of continuous
functions on the unit circle.
Suppose that $\alpha$ is an endomorphism of $\CT$ which leaves $\AD$ invariant.
We refer to the restriction of $\alpha$ to $\AD$ as $\alpha$ as well.
Then $b = \alpha(z) \in \AD$; and has spectrum
\[
 \sigma_{\AD}(b) \subset \sigma_{\AD}(z) = \ol{\bD}
 \qand \sigma_{\CT}(b) \subset \sigma_{\CT}(z) = \bT.
\]
Thus $\|b\|=1$ and $b(\bT) \subset\bT$.
It follows that $b$ is a finite Blaschke product.
Therefore $\alpha(f) = f \circ b$ for all $f \in \CT$.
When $b$ is not constant, $\alpha$ is completely isometric.

A (completely) contractive representation $\rho$ of $\AD$ is determined
by $\rho(z) = A$, which must be a contraction.  The converse follows
from the matrix von Neumann inequality; and shows that $\rho(f) = f(A)$ is
a complete contraction.
A covariant representation of $(\AD,\alpha)$ is thus
determined by a pair of contractions $(A,T)$ such that $AT=T b(A)$.
The representation of $\AD \times_{\alpha} \bZ^+$ is given by
\[  \rho\times T \big( \sum_{i=0}^n \ft^i f_i \big) = \sum_{i=0}^n T^i f_i(A) ,\]
which extends to a completely contractive representation of the semicrossed
product by the universal property.

A contractive representation $\sigma$ of $\CT$ is a $*$-representation,
and is likewise determined by $U = \sigma(z)$, which must be unitary;
and all unitary operators yield such a representation by the functional calculus.
A covariant representation of $(\CT,\alpha)$ is given by a pair $(U,T)$
where $U$ is unitary and $T$ is a contraction satisfying $UT=T b(U)$.
To see this,  multiply on the left by $U^*$ and on the right by $b(U)^*$
to obtain the identity
\[ U^*T = Tb(U)^* = T \bar{b}(U) = T \alpha(\bar{z})(U) .\]
The set of functions $\{f \in \CT : f(U) T = T \alpha(f)(U) \}$ is easily seen to
be a norm closed algebra.  Since it contains $z$ and $\bar z$, it is all
of $\CT$.  So the covariance relation holds.

\begin{thm} \label{basic}
Let $b$ be a finite Blaschke product, and let $\alpha(f) = f \circ b$.
Then $\AD \times_{\alpha} \bZ^+$ is $($canonically completely isometrically isomorphic to$)$
a subalgebra of $\CT \times_\alpha \bZ^+$.
\end{thm}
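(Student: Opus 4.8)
The plan is to realise the canonical inclusion $\AD \hookrightarrow \CT$ as a morphism of semicrossed products and prove it is completely isometric by a dilation argument. First I would check that the inclusion induces a well-defined completely contractive homomorphism $\iota \colon \AD\times_\alpha\bZ^+ \to \CT\times_\alpha\bZ^+$ fixing $\ft$ and the coefficients. This is immediate from the universal property: every covariant pair $(U,T)$ for $(\CT,\alpha)$ restricts to a covariant pair $(U,T)$ for $(\AD,\alpha)$ — a unitary is in particular a contraction, and $\alpha$ is compatible with the inclusion — so the defining supremum for the $\CT$-norm runs over a subfamily of the covariant pairs defining the $\AD$-norm. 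Hence $\|\iota(p)\|_\CT \le \|p\|_\AD$, and likewise at every matrix level.

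The content is the reverse inequality, which I would derive from the dilation statement announced in the introduction: every covariant pair $(A,T)$ for $(\AD,\alpha)$, with $A,T$ contractions on $\H$ satisfying $AT = Tb(A)$, dilates to a covariant pair $(U,W)$ for $(\CT,\alpha)$, where $U$ is unitary, $W$ is a contraction, $UW = Wb(U)$, and the compression identity
\[ (\rho_A\times T)(p) = P_\H\,(\sigma_U\times W)(\iota(p))\big|_\H \qforal p \]
holds. Granting this, for each $(A,T)$ one gets $\|(\rho_A\times T)(p)\| \le \|(\sigma_U\times W)(\iota(p))\| \le \|\iota(p)\|_\CT$, and taking the supremum over all covariant $(A,T)$ yields $\|p\|_\AD \le \|\iota(p)\|_\CT$. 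The completely isometric statement then follows by running the same argument on the ampliations $(A\otimes I_k,\, T\otimes I_k)$, which are again covariant since $b(A\otimes I_k)=b(A)\otimes I_k$, using that compression by $P_\H\otimes I_k$ intertwines the $k$-fold ampliations.

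To construct the dilation I would take $U$ to be the minimal unitary dilation of the contraction $A$, so that $P_\H f(U)\big|_\H = f(A)$ for every $f\in\AD$. The decisive feature of a finite Blaschke product is that $b$ is \emph{inner}: hence $b(U)$ is again unitary and is a dilation of $b(A)$, with $\H$ coinvariant for $b(U)$ as well. The covariance relation $AT = Tb(A)$ says precisely that $T$ intertwines the contraction $b(A)$ with $A$, so I would invoke the commutant lifting theorem to lift $T$ to a contraction $W$ on the dilation space with $UW = Wb(U)$, $\|W\| = \|T\|$, and $W^*\H \subseteq \H$. This last coinvariance makes $W$ lower-triangular with corner $T$, so that $P_\H W^i\big|_\H = T^i$ and the cross terms $P_\H W^i P_{\H^\perp} f_i(U)\big|_\H$ vanish; combined with $P_\H f_i(U)\big|_\H = f_i(A)$ this gives the displayed identity term by term. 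Finally $(U,W)$ is covariant for $(\CT,\alpha)$ exactly because $U$ is unitary and $UW = Wb(U)$, which is the covariance relation recorded just before the theorem.

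I expect the main obstacle to be the correct invocation of commutant lifting. The standard theorem lifts an intertwiner through the \emph{minimal isometric} dilation of the source contraction, whereas here I want to lift through $b(U)$, which is the inner function $b$ applied to the minimal unitary dilation of $A$ rather than the minimal isometric dilation of $b(A)$. Justifying this passage — equivalently, producing a lifting $W$ that simultaneously intertwines with the chosen unitary $U$ and leaves $\H$ coinvariant — is where the inner-ness of $b$ must be used carefully, and it is the step I would write out in full detail.
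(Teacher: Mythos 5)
Your proposal is correct and is essentially the paper's own argument: the paper, too, reduces the complete isometry to dilating each covariant pair $(A,T)$ to a covariant pair for $(\CT,\alpha)$ with unitary first coordinate, obtained from a unitary dilation $U$ of $A$ (so that $b(U)$ is unitary since $b$ is inner) together with the Sz.-Nagy--Foia\c{s} commutant lifting theorem, exactly as you outline. The only cosmetic difference is that the paper converts the intertwining $AT=Tb(A)$ into a genuine commutation of $\begin{sbmatrix}0&T\\0&0\end{sbmatrix}$ with $\diag(A,b(A))$ and applies the commutant (rather than intertwining) form of lifting to the unitary dilation $\diag(U,b(U))$ --- precisely the $2\times 2$ trick that makes the two forms equivalent --- and it passes over, without comment, the same non-minimality subtlety you flag at the end.
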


\begin{proof}
To establish that $\AD \times_{\alpha} \bZ^+$ is completely isometric to a
subalgebra of $\CT \times_\alpha \bZ^+$,  it suffices to show that each
$(A,T)$ with $AT=Tb(A)$ has a dilation to a pair $(U,S)$ with $U$ unitary
and $S$ a contraction such that $US=Sb(U)$
and $P_\H S^nU^m|_\H = T^nA^m$ for all $m,n \ge 0$.
This latter condition is equivalent to $\H$ being semi-invariant for the
algebra generated by $U$ and $S$.

The covariance relation can be restated as
\[
 \begin{bmatrix}A&0\\0&b(A)\end{bmatrix}
 \begin{bmatrix}0&T\\0&0\end{bmatrix} =
 \begin{bmatrix}0&T\\0&0\end{bmatrix}
 \begin{bmatrix}A&0\\0&b(A)\end{bmatrix}
\]
Dilate $A$ to a unitary $U$ which leaves $\H$ semi-invariant.
Then $\begin{sbmatrix}A&0\\0&b(A)\end{sbmatrix}$ dilates to
$\begin{sbmatrix}U&0\\0&b(U)\end{sbmatrix}$.
By the Sz.Nagy-Foia\c s\ Commutant Lifting Theorem, we may dilate
$\begin{sbmatrix}0&T\\0&0\end{sbmatrix}$ to a contraction of the form
$\begin{sbmatrix}*&S\\ *& *\end{sbmatrix}$ which commutes with
$\begin{sbmatrix}U&0\\0&\alpha(U)\end{sbmatrix}$ and has $\H\oplus\H$
as a common semi-invariant subspace.
Clearly, we may take the $*$ entries to all equal $0$ without changing things.
So $(U,S)$ satisfies the same covariance relations
$US = Sb(U)$.
Therefore we have obtained a dilation to the covariance relations for $(\CT,\alpha)$.
\end{proof}

Once we have a covariance relation for  $(\CT,\alpha)$,
we can try to dilate further.
Extending $S$ to an isometry $V$ follows a well-known path.
Observe that
\[ b(U)S^*S = S^*US = S^*S b(U) .\]
Thus $D = (I-S^*S)^{1/2}$ commutes with $b(U)$.
Write $b^{(n)}$ for the composition of $b$ with itself $n$ times,
Hence we can now use the standard Schaeffer dilation of $S$ to an isometry
$V$ and simultaneously dilate $U$ to $U_1$ as follows:
\[
 V =
 \begin{bmatrix} S & 0 & 0 & 0 & \dots\\
 D & 0 & 0 & 0 & \dots\\
 0 & I & 0 & 0 & \dots\\
 0 & 0 & I & 0 & \dots\\
 \vdots & \vdots & \vdots & \ddots & \ddots
 \end{bmatrix}
 \AND
 U_1 =
 \begin{bmatrix} U & 0 & 0 & 0 & \dots\\
 0 & b(U_1)\! & 0 & 0 & \dots\\
 0 & 0 & \!\!b^{(2)}(U_1)\!\! & 0 & \dots\\
 0 & 0 & 0 &\!\! b^{(3)}(U_1)\!\! & \dots\\
 \vdots & \vdots & \vdots & \vdots & \ddots
 \end{bmatrix} .
\]
A simple calculation shows that $U_1 V = V b(U_1)$.
So as above, $(U,V)$ satisfies the covariance relations for $(\CT,\alpha)$.
\bigbreak

We would like to make $V$ a unitary as well.  This is possible in the case where $b$ is non-constant,
but the explicit construction is not obvious. Instead, we use the theory of C*-envelopes and maximal dilations. First we need the following.

\begin{lem} \label{envelopeidentified}
Let $b$ be a finite Blaschke product, and let $\alpha(f) = f \circ b$. Then
\[
 \cenv(\AD \times_{\alpha} \bZ^+) \simeq \cenv(\CT \times_{\alpha} \bZ^+) .
\]
\end{lem}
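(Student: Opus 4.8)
The plan is to compute both C*-envelopes as the C*-algebra generated by a single maximal representation, namely one of the larger algebra $\CT\times_\alpha\bZ^+$, and then to show that restricting it to $\AD\times_\alpha\bZ^+$ still computes the envelope. I will use the Dritschel--McCullough description of the C*-envelope: the C*-algebra generated by any completely isometric maximal representation of a unital operator algebra $\B$ is (canonically isomorphic to) $\cenv(\B)$, and every completely isometric representation dilates to a maximal one.

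First I would fix a completely isometric representation of $\CT\times_\alpha\bZ^+$ and dilate it to a maximal representation $\pi$, so that $\cenv(\CT\times_\alpha\bZ^+)\simeq\ca(\pi(\CT\times_\alpha\bZ^+))$. As a representation of $\CT\times_\alpha\bZ^+$, $\pi$ is a covariant pair $(U,W)$ with $U=\pi(z)$ unitary, since $\pi|_{\CT}$ is a $*$-representation of the C*-algebra $\CT$. Because $U$ is unitary, $\pi(\bar z)=U^{*}\in\ca(U)\subseteq\ca(\pi(\AD))$; as $\AD$ generates $\CT$ as a C*-algebra and the generator $\ft$ is common to both semicrossed products, I get the crucial identification
\[
\ca\big(\pi(\AD\times_\alpha\bZ^+)\big)=\ca\big(\pi(\CT\times_\alpha\bZ^+)\big)\simeq\cenv(\CT\times_\alpha\bZ^+).
\]
The restriction $\pi|_{\AD\times_\alpha\bZ^+}$ is, of course, still completely isometric.

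It therefore remains to prove that $\pi|_{\AD\times_\alpha\bZ^+}$ is a \emph{maximal} representation of $\AD\times_\alpha\bZ^+$; granting this, the Dritschel--McCullough description (applied now to $\AD\times_\alpha\bZ^+$) gives $\cenv(\AD\times_\alpha\bZ^+)\simeq\ca(\pi(\AD\times_\alpha\bZ^+))$, and the displayed identity finishes the proof. To see maximality, I would start with an arbitrary covariant dilation $(\tilde A,\tilde T)$ of $(U,W)$ for $(\AD,\alpha)$ on a space $\K\supseteq\H$. By Theorem~\ref{basic}, applied to $(\tilde A,\tilde T)$, this extends to a covariant pair $(\hat U,\hat S)$ for $(\CT,\alpha)$ on some $\hat\K\supseteq\K$, dilating $(\tilde A,\tilde T)$ and hence, transitively, $(U,W)$. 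The point I expect to be the main obstacle is to check that $(\hat U,\hat S)$ is in fact a dilation of $\pi$ \emph{as a representation of the larger algebra} $\CT\times_\alpha\bZ^+$, i.e.\ that it compresses to $\pi$ on every $\ft^{i}f$ with $f\in\CT$, not merely $f\in\AD$. This is exactly where the unitarity of $U$ is decisive: the relation $P_{\H}\hat U^{m}|_{\H}=U^{m}$ forces $\H$ to reduce the contraction $\hat U$, so that $f(\hat U)$ commutes with the projection onto $\H$ and restricts to $f(U)$ there; combined with $P_{\H}\hat S^{i}|_{\H}=W^{i}$ this yields $P_{\H}\hat S^{i}f(\hat U)|_{\H}=W^{i}f(U)=\pi(\ft^{i}f)$ for all $f\in\CT$.

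With that compression identity in hand, $(\hat U,\hat S)$ is a genuine dilation of the maximal representation $\pi$ of $\CT\times_\alpha\bZ^+$, hence trivial: $\H$ reduces $(\hat U,\hat S)$. Since $\H\subseteq\K\subseteq\hat\K$ and a reducing subspace for a dilation reduces every intermediate dilation, $\H$ also reduces the intermediate pair $(\tilde A,\tilde T)$; that is, the original $(\AD,\alpha)$-dilation was trivial. Thus $\pi|_{\AD\times_\alpha\bZ^+}$ is maximal, as required. A cleaner-sounding but only superficially different alternative, worth keeping in reserve, is to run the dilation in the other direction -- taking a maximal dilation of a completely isometric representation of $\AD\times_\alpha\bZ^+$, observing via Theorem~\ref{basic} that maximality forces the image of $z$ to be unitary, and then checking complete isometry on $\CT\times_\alpha\bZ^+$; the same unitarity-of-$U$ computation is what makes either route work.
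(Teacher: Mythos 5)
Your proof is correct, but it takes a genuinely different route from the paper's. The paper argues through the Shilov ideal: it takes $\C=\cenv(\CT\times_\alpha\bZ^+)$, which by \cite{KK} is a Cuntz--Pimsner algebra carrying a gauge action that leaves $\CT$ and both semicrossed products invariant; since $\C$ is then also a C*-cover of $\AD\times_\alpha\bZ^+$ (via Theorem~\ref{basic}), the corresponding Shilov ideal $\J$ is gauge-invariant, and a nonzero gauge-invariant ideal must meet $\CT$ nontrivially --- but then the quotient map $\AD\to\CT/(\J\cap\CT)$ being completely isometric would contradict $\cenv(\AD)=\CT$, so $\J=0$. Your argument instead stays entirely inside dilation theory: you take a maximal completely isometric representation $\pi$ of $\CT\times_\alpha\bZ^+$ in the sense of \cite{DM}, use unitarity of $\pi(z)$ to see that $\pi(\AD\times_\alpha\bZ^+)$ and $\pi(\CT\times_\alpha\bZ^+)$ generate the same C*-algebra, and then show the restriction of $\pi$ to $\AD\times_\alpha\bZ^+$ is still maximal by pushing any $(\AD,\alpha)$-covariant dilation of $(U,W)$ up to a $(\CT,\alpha)$-covariant one via Theorem~\ref{basic}. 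The step you correctly flagged as the crux --- verifying the compression identity on all of $\CT$ rather than just $\AD$ --- is handled by the rigidity fact that a contraction compressing to a unitary is reduced by the corresponding subspace, which is exactly right; the triviality of the intermediate dilation then follows since compressions of a fixed operator are transitive. Both proofs rest on Theorem~\ref{basic}. What the paper's proof buys is brevity given the machinery of \cite{KK} (the Cuntz--Pimsner picture and the fact that nonzero gauge-invariant ideals meet the coefficient algebra), a picture the paper needs anyway to identify the envelope as a crossed product afterwards. What yours buys is self-containedness: it needs only \cite{DM} and Theorem~\ref{basic}, avoids the gauge-invariance and ideal-intersection arguments entirely, and --- importantly for the paper's later use of this Lemma in Theorem~\ref{constantcase} --- it covers the constant Blaschke product case on the same footing, since nothing in your argument requires $\alpha$ to be injective.
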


\begin{proof} The previous Theorem identifies $\AD \times_{\alpha} \bZ^+$
completely isometrically as a subalgebra of $\CT \times_{\alpha} \bZ^+$.
The C*-envelope $\C$ of $\CT \times_{\alpha} \bZ^+$ is a
Cuntz-Pimsner algebra containing a copy of $\CT$
which is invariant under gauge actions.
Now $\C$ is a C*-cover of $\CT \times_{\alpha} \bZ^+$, so it is easy
to see that it is also a C*-cover of $\AD \times_{\alpha} \bZ^+$.
Since $\AD \times_{\alpha} \bZ^+$ is invariant under the same gauge actions,
its Shilov ideal $\J \subseteq \C$ will be invariant by these actions as well.
If $\J \neq 0 $ then by gauge invariance $\J \bigcap \CT \neq 0$.
Since the quotient map
\[
\AD \longrightarrow \CT \slash (\J\cap \CT)
\]
is completely isometric, we obtain a contradiction. Hence $\J = 0$ and the conclusion follows.
\end{proof}

We now recall some of the theory of semicrossed products of C*-algebras.
When $\fA$ is a C*-algebra, the completely isometric endomorphisms are the
faithful $*$-endomorphisms. In this case,
Peters shows \cite[Prop.I.8]{Pet} that there is a unique C*-algebra $\fB$,
a $*$-automorphism $\beta$ of $\fB$ and an injection $j$ of $\fA$ into $\fB$
so that $\beta\circ j = j \alpha$ and $\fB$ is the closure
of $\bigcup_{n\ge0} \beta^{-n}(j(\fA))$.
It follows \cite[Prop.II.4]{Pet} that $\fA \times_\alpha \bZ_+$ is
completely isometrically isomorphic to the subalgebra of the crossed product algebra
$\fB \times_\beta \bZ$ generated as a non-self-adjoint algebra by an isomorphic
copy $j(\fA)$ of $\fA$ and the unitary $\fu$ implementing $\beta$ in the crossed product.
Actually, Kakariadis and the second author \cite[Thm.2.5]{KK} show that $\fB \times_\beta \bZ$
is the C*-envelope of $\fA \times_\alpha \bZ_+$.

In the case where $\fA= \rC (X)$ is commutative and $\alpha$ is induced by an injective self-map of $X$, the pair $(\fB, \beta)$ has an alternative description.

\begin{defn}
Let $X$ be a Hausdorff space and $\phi$ a surjective self-map of $X$. We define the \textit{solenoid system of }$(X, \phi)$ to be the pair $(\S_{\phi}, s)$, where
\[
\S_{\phi} = \{ (x_n)_{n\ge1 } : x_n = \phi(x_{n+1}), x_n \in X, n \ge1 \}
\]
equipped with the relative topology inherited from the product topology on $\prod_{i=1}^{\infty} \, X_i$, $X_i=X$, $i=1,2, \dots$, and $s$ is the backward shift on $\S_{\phi}$.
\end{defn}

It is easy to see that in the case where $\fA= \mathrm{C}(X)$ and $\alpha$ is induced by an injective self-map $\phi$ of $X$, the pair $(\fB, \beta)$ for $(\fA, \alpha)$ described above, is conjugate to the solenoid system $(\S_{\phi}, s)$. Therefore, we obtain

\begin{cor}
Let $b$ be a non-constant finite Blaschke product, and let $\alpha(f) = f \circ b$ on $\CT$.  Then
\[
 \cenv(\AD \times_{\alpha} \bZ^+) =\cenv(\rC(\S_{b}) \times_{s} \bZ) .
\]
where $(\S_{b}, s)$ is the solenoid system of $(\bT , b)$.
\end{cor}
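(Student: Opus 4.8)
The plan is to obtain the Corollary by threading together the results already assembled above. Since $b$ is non-constant it maps $\bT$ onto $\bT$, so $\alpha(f)=f\circ b$ is a \emph{faithful} $*$-endomorphism of $\CT$; this is exactly the hypothesis needed to run Peters' dilation machinery.

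First I would apply Lemma~\ref{envelopeidentified} to reduce the C*-envelope of the disk-algebra semicrossed product to that of its circle-algebra counterpart, so that $\cenv(\AD\times_\alpha\bZ^+)\simeq\cenv(\CT\times_\alpha\bZ^+)$. Next, Peters' construction \cite{Pet} yields a C*-algebra $\fB$ carrying a $*$-automorphism $\beta$ that extends $\alpha$, and \cite[Thm.2.5]{KK} identifies the crossed product $\fB\times_\beta\bZ$ with $\cenv(\CT\times_\alpha\bZ^+)$. Everything then comes down to recognizing the pair $(\fB,\beta)$.

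The remaining step is the identification of $(\fB,\beta)$ with $(\rC(\S_b),s)$. Here $\fB$ is the inductive limit of the tower $\CT\xrightarrow{\alpha}\CT\xrightarrow{\alpha}\cdots$, so by Gelfand duality it is the algebra of continuous functions on the inverse limit of $\bT\xleftarrow{b}\bT\xleftarrow{b}\cdots$, which is precisely the solenoid $\S_b$; under this duality $\beta$ transports to the automorphism induced by the shift $s$. This is the content of the remark preceding the statement, so $\fB\times_\beta\bZ\cong\rC(\S_b)\times_s\bZ$. Finally, since $\rC(\S_b)\times_s\bZ$ is already a C*-algebra it equals its own C*-envelope, and chaining the isomorphisms delivers the asserted equality.

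The only genuinely substantive point is the Gelfand-duality identification of the Peters pair with the solenoid. This is routine and is exactly what the preceding remark records, so once it is granted the Corollary follows immediately.
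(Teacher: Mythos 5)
Your proposal is correct and follows essentially the same route as the paper: Lemma~\ref{envelopeidentified} to pass from $\AD$ to $\CT$, Peters' construction together with \cite[Thm.2.5]{KK} to identify $\cenv(\CT\times_\alpha\bZ^+)$ with $\fB\times_\beta\bZ$, and the Gelfand-duality identification of $(\fB,\beta)$ with $(\rC(\S_b),s)$, which is exactly the remark the paper invokes. Your explicit observations that surjectivity of $b$ on $\bT$ gives faithfulness of $\alpha$, and that a C*-algebra is its own C*-envelope, are details the paper leaves implicit but are entirely in the spirit of its argument.
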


It is worth restating this theorem as a dilation result.

\begin{cor} \label{equivdil}
Let $\alpha$ be an endomorphism of $\AD$ induced by a non-constant
finite Blaschke product
and let $A,T \in \B(\H)$ be contractions satisfying $AT=T\alpha(A)$.
Then there exist unitary operators $U$ and $W$ on a Hilbert space
$\K \supset \H$ which simultaneously dilate $A$ and $T$, in the sense that
$P_\H W^m U^n |_\H = T^m A^n$ for all $m,n \ge 0$,
so that \mbox{$UW=W\alpha(U)$.}
\end{cor}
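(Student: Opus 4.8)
The plan is to read the dilation statement directly off the identification of the C*-envelope made in the preceding Corollary, using the theory of maximal dilations. Since the pair $(A,T)$ with $AT = T\alpha(A)$ is a covariant representation of $(\AD,\alpha)$, it determines a unital completely contractive representation $\rho \times T$ of $\AD \times_\alpha \bZ^+$ with $\rho(z) = A$ and $(\rho \times T)(\ft) = T$. I would invoke the theorem of Dritschel and McCullough on the existence of maximal dilations, together with Arveson's characterization of the C*-envelope through maximal representations: there is a maximal dilation $\pi$ of $\rho \times T$ on a Hilbert space $\K \supseteq \H$ such that $\H$ is semi-invariant for $\pi(\AD \times_\alpha \bZ^+)$, with $P_\H \pi(x)|_\H = (\rho \times T)(x)$ for every $x$, and such that $\pi$ extends to a $*$-representation (still denoted $\pi$) of $\cenv(\AD \times_\alpha \bZ^+)$.

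By the preceding Corollary, obtained through Lemma~\ref{envelopeidentified} and \cite[Thm.~2.5]{KK}, this C*-envelope is the crossed product $\rC(\S_b) \times_s \bZ$, which is its own C*-envelope as it is already a C*-algebra. Under the canonical embedding, the coordinate function $z$ becomes a unitary of $\rC(\S_b)$, while the element $\ft$ maps to the unitary $\fu$ implementing the shift automorphism $s$. I would therefore set $U := \pi(z)$ and $W := \pi(\ft)$. Because $\pi$ is a $*$-representation of a C*-algebra and both $z$ and $\fu$ are unitaries there, $U$ and $W$ are unitary operators on $\K$.

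It then remains to verify the two required relations. The covariance relation $UW = W\alpha(U)$ follows by applying the homomorphism $\pi$ to the identity $z\ft = \ft\alpha(z) = \ft b$ valid in $\AD \times_\alpha \bZ^+$, using that $\pi(b) = b(\pi(z)) = \alpha(U)$ by the functional calculus. For the dilation identity, I apply the compression formula to the monomial $\ft^m z^n \in \AD \times_\alpha \bZ^+$: since $\pi$ is multiplicative one has $\pi(\ft^m z^n) = W^m U^n$, while $(\rho \times T)(\ft^m z^n) = T^m A^n$, so that
\[
 P_\H W^m U^n|_\H = P_\H \pi(\ft^m z^n)|_\H = (\rho \times T)(\ft^m z^n) = T^m A^n
\]
for all $m,n \ge 0$, as required.

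The only genuinely non-routine input, and the place where non-constancy of $b$ is essential, is the identification of the C*-envelope as a crossed product rather than as a Cuntz--Pimsner algebra containing a proper, non-unitary, isometry. It is precisely the injectivity of $\alpha$ (equivalently, $b$ non-constant) that renders the implementing element $\fu$ a \emph{unitary}, and this is exactly what upgrades the dilation of the contraction $T$ from an isometry to a unitary $W$. For constant $b$ this step fails, as anticipated in the introduction; everything else in the argument is a formal transcription of the maximal-dilation statement.
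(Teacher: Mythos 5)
Your proof is correct and follows essentially the same route as the paper: a Dritschel--McCullough maximal dilation extends to a $*$-representation of the C*-envelope, which by Lemma~\ref{envelopeidentified} and \cite[Thm.2.5]{KK} is the crossed product $\rC(\S_b)\times_s\bZ$, in which both the image of $z$ and the element implementing the shift are unitary. The only cosmetic difference is that the paper first dilates $(A,T)$ to a covariant pair for $(\CT,\alpha)$ before taking the maximal dilation, whereas you pass directly to a maximal dilation of the $\AD\times_\alpha\bZ^+$-representation; this is immaterial, since Lemma~\ref{envelopeidentified} (which rests on Theorem~\ref{basic}) already carries that content.
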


\begin{proof}
Every covariant representation $(A,T)$ of $(\AD,\alpha)$ dilates to a
covariant representation $(U_1,V)$ of $(\CT,\alpha)$.  This in turn dilates to
a maximal dilation $\tau$ of $\CT\times_\alpha\bZ^+$, in the sense of
Dritschel and McCullough \cite{DM}.  The maximal dilations extend to
$*$-representations of the C*-envelope.  Then $A$ is dilated to
$\tau(j(z)) = U$ is unitary and $T$ dilates to the unitary $W$ which implements the
automorphism $\beta$ on $\fB$, and restricts to the action of $\alpha$ on $\CT$.
\end{proof}

The situation changes when we move to \textit{non-injective} 
endomorphisms $\alpha$ of $\AD$.
Indeed, let $\lambda \in \bT$ and consider the endomorphism $\gal$ of $\AD$ 
induced by evaluation on $\lambda$, i.e., $\gal(f)(z)= f(\lambda)$, $\forall z \in \bD$. 
(Thus $\gal$ is the endomorphism of $\AD$ corresponding to a constant Blaschke product.) 
If two contractions $A,T$ satisfy $AT=T\gal(A) = \lambda T$, then the existence 
of unitary operators $U, W$, dilating $A$ and $T$ respectively, 
implies that $A=\lambda I$. 
It is easy to construct a pair $A,T$ satisfying $AT= \lambda T$ and yet $A\neq \lambda I$. 
This shows that the analogue Corollary \ref{equivdil} fails for $\alpha = \gal$ 
and therefore one does not expect $ \cenv(\AD \times_{\gal} \bZ^+) $ to be 
isomorphic to the crossed product of a commutative C*-algebra, 
at least under canonical identifications. 
However as we have seen, a weakening of Corollary \ref{equivdil} is valid 
for $\alpha = \gal$ if one allows $W$ to be an isometry instead of a unitary operator. 
In addition, we can identify $ \cenv(\AD \times_{\alpha} \bZ^+) $ as being 
\textit{strongly Morita equivalent} to a crossed product C*-algebra. 
Indeed, if
\[
 e \colon \bT \times \bN \longrightarrow \bT \times \bN
\]
is defined as
\[
e(z, n)=
\left\{
\begin{array}{cl} (1, 1) & \mbox{if } n =1 \\
                 (z, n-1) & \mbox{otherwise,}
                 \end{array}
                 \right.
                 \]
                 then
\begin{thm} \label{constantcase}
Let $\alpha= \gal$ be an endomorphism of $\AD$ induced by evaluation at a point $\lambda \in \bT$. Then
$\cenv(\AD \times_{\alpha} \bZ^+)$  is strongly Morita equivalent to $\rC(\S_{e}) \times_{s} \bZ$,
where $e \colon \bT \times \bN \longrightarrow \bT \times \bN$ is defined above and $(\S_{e}, s)$ is the solenoid system of $(\bT \times \bN , \, e)$.
\end{thm}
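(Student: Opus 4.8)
The plan is to realize $\cenv(\CT\times_\alpha\bZ^+)$ as a Cuntz--Pimsner algebra, repair its non-injectivity by adding a tail, and then recognize the repaired algebra as a solenoid crossed product. The reduction in Lemma~\ref{envelopeidentified} applies verbatim to the degenerate Blaschke product $b=\lambda$ (its proof only uses gauge invariance and the complete isometry of $\AD\hookrightarrow\CT$), so it suffices to identify $\cenv(\CT\times_\alpha\bZ^+)$ up to strong Morita equivalence. Conjugating by a rotation of $\bT$ we may assume $\lambda=1$, so that $\alpha$ is induced by the constant self-map $c\colon\bT\to\bT$, $c(z)=1$; the systems for different $\lambda$ are topologically conjugate and yield conjugate solenoids. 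As already noted in the proof of Lemma~\ref{envelopeidentified}, $\cenv(\CT\times_\alpha\bZ^+)$ is the Cuntz--Pimsner algebra $\O_X$ of the C*-correspondence $X=X_\alpha$ over $\CT$, where $X=\CT$ as a right Hilbert module and the left action is $\phi(a)=a(1)\,I=\operatorname{ev}_1(a)\,I$.

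Because $\phi$ factors through the character $\operatorname{ev}_1$, its kernel is the maximal ideal $\M_1=\{a:a(1)=0\}$; equivalently the inducing map $c$ is not surjective. This is exactly the obstruction that prevents the Peters/\cite{KK} machinery (which requires an injective endomorphism) from presenting $\O_X$ directly as a solenoid crossed product. The remedy is to add a tail in the sense of \cite{MT}, in the form adapted to self-maps in \cite{DR,KK2}. For a single proper self-map $\sigma$ of a locally compact space $Y$ that fails to be surjective, this tail replaces $(Y,\sigma)$ by $(Y\times\bN,\td\sigma)$, where
\[
\td\sigma(y,n)=\begin{cases}(\sigma(y),1)&n=1,\\(y,n-1)&n\ge 2.\end{cases}
\]
The levels $Y\times\{2,3,\dots\}$ furnish the backward orbits missing from $(Y,\sigma)$, so that $\td\sigma$ is surjective and proper. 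Specializing to $Y=\bT$ and $\sigma=c$ gives precisely the map $e$ of the statement.

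The content of the adding-tails theorem is that passing from $X$ to the dynamical correspondence $\td X=X_{\alpha_e}$ over $\rC(\bT\times\bN)$ attached to $(\bT\times\bN,e)$ preserves the Cuntz--Pimsner algebra up to strong Morita equivalence, so that $\O_X$ is strongly Morita equivalent to $\O_{\td X}$. I would invoke the version of this result in \cite{DR,KK2}, whose hypotheses are met since $\phi$ has non-trivial kernel $\M_1$ and the tail above is the prescribed one. The hard part — and the principal obstacle — is to check that the abstract tail attached to $X$ coincides, as a C*-correspondence, with $X_{\alpha_e}$ for the concrete system $(\bT\times\bN,e)$; this is a bookkeeping identification matching the formal preimages produced by the construction with the extra levels $\bT\times\{2,3,\dots\}$ and the downward shift of $\td\sigma$ on them.

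It then remains to identify $\O_{\td X}$. Since $e$ is surjective, $\alpha_e(f)=f\circ e$ is an injective $*$-endomorphism of $\rC(\bT\times\bN)$, and the system is now of the type covered by Peters \cite{Pet} and \cite{KK}: the semicrossed product $\rC(\bT\times\bN)\times_{\alpha_e}\bZ^+\cong\T^+(\td X)$ has C*-envelope $\O_{\td X}\cong\fB\times_\beta\bZ$, where $(\fB,\beta)=\varinjlim(\rC(\bT\times\bN),\alpha_e)$. Exactly as in the non-constant case treated above, the commutative direct limit is $\fB\cong\rC(\S_e)$ with $\beta$ the automorphism induced by the shift $s$, so $\O_{\td X}\cong\rC(\S_e)\times_s\bZ$. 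Chaining the identifications,
\[
\cenv(\AD\times_\alpha\bZ^+)\cong\cenv(\CT\times_\alpha\bZ^+)\cong\O_X,
\]
which is strongly Morita equivalent to $\O_{\td X}\cong\rC(\S_e)\times_s\bZ$, as claimed. Besides the tail-matching step, the only technical care needed is the non-unital, non-compact nature of $\rC(\bT\times\bN)$ and $\S_e$: one verifies that $e$ is proper (so $\alpha_e$ preserves $\rC_0$) and that the cited Morita-equivalence and direct-limit statements apply in the non-unital setting.
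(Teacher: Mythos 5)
Your proposal is correct and follows essentially the same route as the paper: reduce to $\cenv(\CT\times_\alpha\bZ^+)$ via Lemma~\ref{envelopeidentified}, add a tail to the non-injective system to pass to the injective system $(\bT\times\bN,e)$ with a strong Morita equivalence of Cuntz--Pimsner algebras (the paper cites \cite[Example 4.3]{KK2} for precisely this step), and then identify that algebra as $\rC(\S_e)\times_s\bZ$ through the Peters/Kakariadis--Katsoulis solenoid description following Lemma~\ref{envelopeidentified}. Your extra details --- the rotation normalization $\lambda=1$, the explicit kernel $\M_1$ of the left action, and the flagged non-unital/properness checks --- are points the paper glosses over, but they do not constitute a different method.
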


\begin{proof} In light of Lemma~\ref{envelopeidentified}, it suffices to identify the C*-envelope of $\CT \times_{\alpha} \bZ^+$. As $\alpha$ is no longer an injective endomorphism of $\CT$, we invoke the process of adding tails to C*-correspondences \cite{MT}, as modified in \cite{DR, KK2}.

Indeed, \cite[Example 4.3]{KK2} implies that the C*-envelope of the tensor algebra associated with the dynamical system $(\CT, \alpha)$ is strongly Morita equivalent to the Cuntz-Pimsner algebra associated with the injective dynamical system $(\bT \times \bN , \, e)$ defined above. Therefore by invoking the solenoid system of $(\bT \times \bN , \, e)$, the conclusion follows from the discussion following Lemma~\ref{envelopeidentified}.
\end{proof}


\end{document}